\theoremstyle{plain}
\newtheorem{theorem}{Theorem}[section]
\newtheorem{proposition}[theorem]{Proposition}
\newtheorem{lemma}[theorem]{Lemma}
\newtheorem{corollary}[theorem]{Corollary}
\theoremstyle{definition}
\newtheorem{definition}[theorem]{Definition}
\newtheorem{example}[theorem]{Example}
\begin{document}
\def\sect#1{\section*{\leftline{\large\bf #1}}}
\def\th#1{\noindent{\bf #1}\bgroup\it}
\def\endth{\egroup\par}

\title[The Classical Inequalities]{
Vector Lattices and $f$-Algebras: The Classical Inequalities}
\author{G. Buskes}
\address{Department of Mathematics, University of Mississippi, University,
MS 38677, USA}
\email{mmbuskes@olemiss.edu}
\author{C. Schwanke}
\address{Unit for BMI, North-West University, Private Bag X6001, Potchefstroom,
2520, South Africa}
\email{schwankc326@gmail.com}
\date{\today}
\subjclass[2010]{46A40}
\keywords{vector lattice, $f$-algebra, Cauchy-Schwarz inequality, H\"older inequality, Minkowski inequality}

\begin{abstract}
We present some of the classical inequalities in analysis in the context of Archimedean (real or complex) vector lattices and $f$-algebras. In particular, we prove an identity for sesquilinear maps from the Cartesian square of a vector space to a geometric mean closed Archimedean vector lattice, from which a Cauchy-Schwarz inequality follows. A reformulation of this result for sesquilinear maps with a geometric mean closed semiprime Archimedean $f$-algebra as codomain is also given. In addition, a sufficient and necessary condition for equality is presented. We also prove a H\"older inequality for weighted geometric mean closed Archimedean $\Phi$-algebras, substantially improving results by Boulabiar and Toumi. As a consequence, a Minkowski inequality for weighted geometric mean closed Archimedean $\Phi$-algebras is obtained.
\end{abstract}

\maketitle

\section{Introduction}\label{S:intro}

Rich connections between the theory of Archimedean vector lattices and the classical inequalities in analysis, though hitherto little explored, were implied in \cite{BerHui} and the subsequent developments in \cite{Bo2,BusvR4,Kus2,Tou}. In particular, \cite{BusvR4} presents a relationship between the Cauchy-Schwarz inequality and the theory of multilinear maps on vector lattices, built on the analogy between disjointness in vector lattices and orthogonality in inner product spaces. The ideas in \cite{BusvR4} led to the construction of powers of vector lattices (see \cite{BoBus,BusvR2}), a theory that was recently extended to the complex vector lattice environment in \cite{BusSch2}. This paper follows the complex theme of \cite{BusSch2} and in fact contains results that are valid for both real vector lattices and complex vector lattices. We also conjoin the Cauchy-Schwarz, H\"older, and Minkowski inequalities with the theory of geometric mean closed Archimedean vector lattices, as found in \cite{Az,AzBoBus}.

We first discuss these results more closely.

In \cite[Corollary 4]{BusvR4}, the first author and van Rooij extend the classical Cauchy-Schwarz inequality as follows. If $V$ is a real vector space and $A$ is an Archimedean almost $f$-algebra then for every bilinear map $T\colon V\times V\rightarrow A$ such that
\begin{itemize}
	\item[(1)] $T(v,v)\geq 0\ (v\in V)$, and
	\item[(2)] $T(u,v)=T(v,u)\ (u,v\in V)$
\end{itemize}
we have
\[
T(u,v)^2\leq T(u,u)T(v,v)\ (u,v\in V).
\]
This is the classical Cauchy-Schwarz inequality when $A=\mathbb{R}$, which is equivalent to
\begin{equation}\label{(1)}
|T(u,v)|\leq\bigl(T(u,u)T(v,v)\bigr)^{1/2}=2^{-1}\inf\{\theta T(u,u)+\theta^{-1}T(v,v):\theta\in(0,\infty)\}
\end{equation}
for $u,v\in V$. 

The proof of \cite[Corollary 4]{BusvR4} easily adapts to a natural complex analogue for sesquilinear maps, though the condition for equality in the classical Cauchy-Schwarz inequality (see e.g. \cite[page 3]{Con}) does not hold in this more general context (see Example~\ref{E:noclaeqco}).

In Theorem~\ref{T:CSI} of this paper, we extend both the real and complex versions of the classical Cauchy-Schwarz inequality by replacing the codomain of the sesquilinear maps with an Archimedean (real or complex) vector lattice that is closed under the infimum in \eqref{(1)} above. We also prove a convenient formula for the difference between the two sides of the Cauchy-Schwarz inequality and use it to generalize the known condition for equality in the classical case. In Corollary~\ref{C:CSI}, we obtain a Cauchy-Schwarz inequality as well as a condition for equality for sesquilinear maps with values in a semiprime Archimedean $f$-algebra that is closed under the infimum in \eqref{(1)}.

Theorem~\ref{T:HI} of this paper proves a H\"older inequality for positive linear maps between Archimedean $\Phi$-algebras that are closed under certain weighted renditions of \eqref{(1)}. Our H\"older inequality generalizes \cite[Theorem 5, Corollary 6]{Bo2} and \cite[Theorem 3.12]{Tou} by (1) weakening the assumption of uniform completeness, (2) including irrational exponents via explicit formulas without restricting the codomain to the real numbers, (3) providing a result for several variables, and (4) enabling the domain and codomain of the positive linear maps in question to be either both real $\Phi$-algebras or both complex $\Phi$-algebras.

We remark that Theorem~\ref{T:HI} is itself a consequence of Proposition~\ref{P:Mali&Me}, which in turn generalizes a reformulation of the classical H\"older inequality by Maligranda \cite[(HI$_{1}$)]{Mali}. Indeed, Proposition~\ref{P:Mali&Me} is a reinterpretation of (HI$_{1}$) for Archimedean vector lattices that simultaneously extends (HI$_{1}$) to several variables. We add that Kusraev \cite[Theorem 4.2]{Kus2} independently developed his own version of (HI$_{1}$) in the setting of uniformly complete Archimedean vector lattices. Our Proposition~\ref{P:Mali&Me} contains \cite[Theorem 4.2]{Kus2}. Noting that Proposition~\ref{P:Mali&Me} relies primarily on the Archimedean vector lattice functional calculus, it (contrary to \cite[Theorem 4.2]{Kus2}) depends only on (at most) the countable Axiom of Choice.

Finally, we employ the H\"older inequality of Theorem~\ref{T:HI} to prove a Minkowski inequality in Theorem~\ref{T:MI}.

We proceed with some preliminaries.

\section{Preliminaries}\label{S:prelims}

We refer the reader to \cite{AB,LuxZan1,Zan2} for any unexplained terminology regarding vector lattices and $f$-algebras. Throughout, $\mathbb{R}$ is used for the real numbers, $\mathbb{C}$ denotes the complex numbers, $\mathbb{K}$ stands for either $\mathbb{R}$ or $\mathbb{C}$, and the symbol for the set of strictly positive integers is $\mathbb{N}$. 

An Archimedean real vector lattice $E$ is said to be \textit{square mean closed} (see \cite[page 482]{AzBoBus}) if $\sup\{ (\cos\theta)f+(\sin\theta)g:\theta\in[0,2\pi]\}$
exists in $E$ for every $f,g\in E$, and in this case we write
\[
f\boxplus g=\sup\{ (\cos\theta)f+(\sin\theta)g:\theta\in[0,2\pi]\}\ (f,g\in E).
\]
The notion of square mean closedness in vector lattices dates back to a 1973 paper by de Schipper, under the term \textit{property (E)} (see \cite[page 356]{dS}). We adopt de Schipper's definition of an Archimedean complex vector lattice but use the terminology found in \cite{AzBoBus}.

Throughout, $V+iV$ denotes the commonly used vector space complexification of a real vector space $V$. An \textit{Archimedean complex vector lattice} is a complex vector space of the form $E+iE$, where $E$ is a square mean closed Archimedean real vector lattice \cite[pages 356--357]{dS}.

An Archimedean real vector lattice will also be called an \textit{Archimedean vector lattice over $\mathbb{R}$}, and an Archimedean complex vector lattice will additionally be referred to as an \textit{Archimedean vector lattice over $\mathbb{C}$}. An \textit{Archimedean vector lattice over} $\mathbb{K}$ is a vector space that is either an Archimedean vector lattice over $\mathbb{R}$ or an Archimedean vector lattice over $\mathbb{C}$. Equivalently, an Archimedean vector lattice over $\mathbb{K}$ is a vector space over $\mathbb{K}$ that is equipped with an Archimedean modulus, as defined axiomatically by Mittelmeyer and Wolff in \cite[Definition 1.1]{MW}.

Given an Archimedean vector lattice $E+iE$ over $\mathbb{C}$, we write $\text{Re}(f+ig)=f$ and $\text{Im}(f+ig)=g\ (f,g\in E)$. For convenience, we write $\text{Re}(f)=f$ and $\text{Im}(f)=0\ (f\in E)$ when $E$ is an Archimedean vector lattice over $\mathbb{R}$. Lemma 1.2, Corollary 1.4, Proposition 1.5, and Theorem 2.2 of \cite{MW} together imply that the Archimedean modulus on an Archimedean vector lattice $E$ over $\mathbb{K}$ is given by the formula
\begin{align*}
|f|=\sup\{\text{Re}(\lambda f):\lambda\in\mathbb{K},|\lambda|=1\}\ (f\in E).
\end{align*}
In particular, for an Archimedean vector lattice $E$ over $\mathbb{R}$ we have $|f|=f\vee(-f)\ (f\in E)$, while $|f+ig|=f\boxplus g\ (f,g\in E)$ holds in any Archimedean vector lattice $E+iE$ over $\mathbb{C}$.

For an Archimedean vector lattice $E$ over $\mathbb{K}$, we define the \textit{positive cone} $E^{+}$ of $E$ by $E^{+}=\{ f\in E:|f|=f\}$, while the real vector lattice $E_{\rho}=\{ f-g:f,g\in E^{+}\}$ is called the \textit{real part} of $E$. With this notation, $E=E_{\rho}$ for every Archimedean vector lattice $E$ over $\mathbb{R}$, whereas $E=E_{\rho}+iE_{\rho}$ whenever $E$ is an Archimedean vector lattice over $\mathbb{C}$.

We say that an Archimedean vector lattice $E$ over $\mathbb{K}$ is \textit{uniformly complete} (respectively, \textit{Dedekind complete}) if $E_{\rho}$ is uniformly complete (respectively, Dedekind complete). We denote the Dedekind completion of an Archimedean vector lattice $E$ over $\mathbb{R}$ by $E^{\delta}$. Given an Archimedean vector lattice $E$ over $\mathbb{C}$, we define $E^{\delta}=(E_{\rho})^{\delta}+i(E_{\rho})^{\delta}$ and note that $E^{\delta}$ is also an Archimedean vector lattice over $\mathbb{C}$. Indeed, every Dedekind complete Archimedean vector lattice over $\mathbb{R}$ is uniformly complete \cite[Lemma 39.2, Theorem 39.4]{LuxZan1}, and every uniformly complete Archimedean vector lattice over $\mathbb{R}$ is square mean closed \cite[Section 2]{BeuHuidP}. 

An Archimedean vector lattice $E$ over $\mathbb{R}$ is said to be \textit{geometric mean closed} (see \cite[page 486]{AzBoBus}) if $\inf\{\theta f+\theta^{-1}g:\theta\in(0,\infty)\}$ exists in $E$ for every $f,g\in E^{+}$, and in this case we write
\[
f\boxtimes g=2^{-1}\inf\{\theta f+\theta^{-1}g:\theta\in(0,\infty)\}\ (f,g\in E^{+}).
\]
We define an Archimedean vector lattice over $\mathbb{K}$ to be \textit{geometric mean closed} (respectively, \textit{square mean closed}) if $A_{\rho}$ is geometric mean closed (respectively, square mean closed). Thus every Archimedean vector lattice over $\mathbb{C}$ is square mean closed. 

We next provide some basic information regarding Archimedean $f$-algebras that will be needed throughout this paper.

The multiplication on an Archimedean $f$-algebra $A$ canonically extends to a multiplication on $A+iA$. We call an Archimedean vector lattice $A$ over $\mathbb{K}$ an \textit{Archimedean $f$-algebra over $\mathbb{K}$} if $A_{\rho}$ is an $f$-algebra. If in addition $A$ has a multiplicative identity then we say that $A$ is an \textit{Archimedean $\Phi$-algebra over $\mathbb{K}$}. It was proved in \cite[Corollary 10.4]{dP} (also see \cite[Theorem 142.5]{Zan2}) that every Archimedean $\Phi$-algebra over $\mathbb{R}$ (and therefore every Archimedean $\Phi$-algebra over $\mathbb{K}$) is semiprime.

The multiplication on an Archimedean $f$-algebra over $\mathbb{K}$ will be denoted by juxtaposition throughout. For Archimedean $f$-algebras $A$ and $B$ over $\mathbb{K}$, we say that a map $T\colon A\rightarrow B$ is \textit{multiplicative} if $T(ab)=T(a)T(b)\ (a,b\in A)$.

Let $A$ be an Archimedean $f$-algebra over $\mathbb{K}$, and let $n\in\mathbb{N}$ and $a\in A^{+}$. If there exists a unique element $r$ of $A^{+}$ such that $r^{n}=a$, we write $r=a^{1/n}$ and say that $a^{1/n}$ exists. If $A$ is an Archimedean semiprime $f$-algebra and $a,r\in A^{+}$ satisfy $r^{n}=a$ then $r=a^{1/n}$ \cite[Proposition 2(ii)]{BeuHui}. Given $m,n\in\mathbb{N}$, we write $a^{m/n}=(a^{m})^{1/n}$, provided $(a^{m})^{1/n}$ exists.

Every uniformly complete semiprime Archimedean $f$-algebra $A$ over $\mathbb{R}$ is geometric mean closed (see \cite[Theorem 2.21]{Az}) and
\begin{equation}\label{(2)}
f\boxtimes g=(fg)^{1/2}\ (f,g\in A^{+}).
\end{equation}
The formula \eqref{(2)} also holds in the weaker condition when $A$ is geometric mean closed. In fact, the proof of \eqref{(2)} in \cite[Theorem 2.21]{Az} does not require uniform completeness, as illustrated in the next proposition. Since \cite{Az} is not widely accessible, we reproduce the proof of \cite[Theorem 2.21]{Az}, while (trivially) extending this theorem to include complex vector lattices.

\begin{proposition}\label{P:gmfa}
Let $A$ be a semiprime Archimedean $f$-algebra over $\mathbb{K}$. If $A$ is geometric mean closed then $f\boxtimes g=(fg)^{1/2}\ (f,g\in A^{+})$. 
\end{proposition}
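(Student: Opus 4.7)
Write $h := f \boxtimes g \in A^{+}$. Because $A$ is semiprime, positive square roots are unique where they exist, so it suffices to verify $h^{2} = fg$; the identification $h = (fg)^{1/2}$ then follows. My strategy is to carry out the computation in the Dedekind completion $A^{\delta}$, which, as recalled in the preliminaries, is a uniformly complete Archimedean $f$-algebra over $\mathbb{K}$ containing $A$ as an order dense and majorizing sub-$f$-algebra. Uniform completeness supplies a unique positive square root for every element of $(A^{\delta})^{+}$, so I set $u := f^{1/2}$ and $v := g^{1/2}$ in $A^{\delta}$, which gives $uv = (fg)^{1/2}$ by uniqueness.

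The key identity, valid in $A^{\delta}_{\rho}$ for every $\theta > 0$ because squares in an $f$-algebra are positive, is
\[
\theta f + \theta^{-1} g - 2 (fg)^{1/2} \;=\; \theta u^{2} + \theta^{-1} v^{2} - 2 uv \;=\; (\theta^{1/2} u - \theta^{-1/2} v)^{2} \;\geq\; 0.
\]
Hence $2 (fg)^{1/2}$ is a lower bound in $A^{\delta}$ for the set $S := \{\theta f + \theta^{-1} g : \theta > 0\}$. By hypothesis the infimum of $S$ in $A$ exists and equals $2h$; order density of $A$ in $A^{\delta}$ preserves this infimum, so $2h = \inf_{A^{\delta}} S \geq 2 (fg)^{1/2}$. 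Squaring (which is order preserving on $(A^{\delta})^{+}$) yields $h^{2} \geq fg$ in $A$.

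For the reverse inequality I will argue that $2 (fg)^{1/2}$ is actually the infimum of $S$ in $A^{\delta}$, not merely a lower bound. Working in a Yosida-type representation of the uniformly complete semiprime $f$-algebra $A^{\delta}$ as (possibly extended-valued) continuous functions on a suitable space, the scalar function $\theta \mapsto \theta u(x)^{2} + \theta^{-1} v(x)^{2}$ attains its minimum $2(uv)(x)$ at each point $x$, so the pointwise infimum over scalar $\theta > 0$ equals $2(fg)^{1/2}$; since this pointwise infimum already lies in $A^{\delta}$ it is also the vector lattice infimum there. Combined with the previous paragraph, $2h = 2(fg)^{1/2}$ in $A^{\delta}$, whence $h^{2} = u^{2} v^{2} = fg$ in $A^{\delta}$. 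The equality descends to $A$ because both sides belong to $A$.

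The principal obstacle is this last infimum computation: $\theta$ runs only through scalars while the optimising value depends on the ``function'' $v/u$, so no single scalar realises the minimum uniformly, and any attempt to prove $\inf_{\theta > 0}(\theta^{1/2} u - \theta^{-1/2} v)^{2} = 0$ purely inside $A$ turns out to be equivalent to the statement under proof. Passing to the uniformly complete $A^{\delta}$ and invoking its functional calculus is what makes the pointwise minimisation rigorous.
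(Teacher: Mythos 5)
Your argument reaches the result by a genuinely different route from the paper's. The paper stays inside the finitely generated $f$-subalgebra $C$ of $A_{\rho}$ generated by $f$, $g$, and $f\boxtimes g$: every nonzero multiplicative vector lattice homomorphism $\phi\colon C\rightarrow\mathbb{R}$ preserves the infimum defining $\boxtimes$, so $\phi\bigl((f\boxtimes g)^{2}\bigr)=\phi(f)\phi(g)=\phi(fg)$, and separation of points by such homomorphisms (\cite[Corollary 2.7]{BusdPvR}) gives $(f\boxtimes g)^{2}=fg$. You instead reduce the geometric mean closed case to the uniformly complete case by passing to $A^{\delta}$ and transferring the infimum via order density. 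The reduction is sound in outline, and your lower bound $f\boxtimes g\geq(fg)^{1/2}$ via the square identity is clean, but two steps are under-justified as written. First, the preliminaries do \emph{not} recall that the multiplication of $A$ extends to $A^{\delta}$ so as to make it a semiprime Dedekind complete $f$-algebra containing $A$ as a sub-$f$-algebra; this is true, but it is a theorem that needs a reference (and the semiprimeness of $A^{\delta}$, which you use for uniqueness of square roots there, needs at least a word). Second, your reverse inequality is precisely the known statement $f\boxtimes g=(fg)^{1/2}$ for the uniformly complete semiprime $f$-algebra $A^{\delta}$ --- the formula the paper records just before the proposition with a citation to \cite[Theorem 2.21]{Az}. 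Citing that result would close your proof immediately; re-deriving it through an unspecified ``Yosida-type representation'' of a possibly unit-free $f$-algebra leaves the existence, multiplicativity, and infimum-compatibility of the representation unaddressed, and in any case invokes the full Axiom of Choice, which the authors are explicitly at pains to avoid elsewhere in the paper. In short: with those two citations supplied your proof is correct and arguably shorter, but it trades the paper's elementary, choice-light subalgebra argument for Dedekind completion and representation machinery.
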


\begin{proof}
Evidently, $A_{\rho}$ is a semiprime Archimedean $f$-algebra over $\mathbb{R}$. Let $f,g\in A^{+}$, and let $C$ be the $f$-subalgebra of $A_{\rho}$ generated by the elements $f,g,f\boxtimes g$. Suppose that $\phi\colon C\rightarrow\mathbb{R}$ is a nonzero multiplicative vector lattice homomorphism. Using \cite[Proposition 2.20]{Az} or \cite[Corollary 3.13]{BusSch} (first equality), we obtain
\begin{align*}
\phi(f\boxtimes g)=\phi(f)\boxtimes\phi(g)=\bigl(\phi(f)\phi(g)\bigr)^{1/2}=\bigl(\phi(fg)\bigr)^{1/2}.
\end{align*}
Therefore,
\begin{align*}
\phi\bigl((f\boxtimes g)^{2}\bigr)=\bigl(\phi(f\boxtimes g)\bigr)^{2}=\phi(fg).
\end{align*}
Since the set of all nonzero multiplicative vector lattice homomorphisms from $C$ into $\mathbb{R}$ separates the points of $C$ (see \cite[Corollary 2.7]{BusdPvR}), we have $(f\boxtimes g)^{2}=fg$.
\end{proof}

We conclude this section with some basic terminology regarding Archimedean vector lattices over $\mathbb{K}$.

Given an Archimedean vector lattice $E$ over $\mathbb{C}$, we define the complex conjugate
\begin{align*}
\overline{f+ig}=f-ig\ (f,g\in E_{\rho}).
\end{align*}
Since every Archimedean vector lattice over $\mathbb{R}$ canonically embeds into an Archimedean vector lattice over $\mathbb{C}$ (see \cite[Theorem 3.3]{BusSch2}), the previous definition also makes sense in Archimedean vector lattices over $\mathbb{R}$ (via such an embedding). If $E$ is an Archimedean vector lattice over $\mathbb{K}$ then the familiar identities $\text{Re}(f)=2^{-1}(f+\bar{f})$ and $\text{Im}(f)=(2i)^{-1}(f-\bar{f})$ are valid for every $f\in E$.

Let $V$ be a vector space over $\mathbb{K}$, and suppose that $F$ is an Archimedean vector lattice over $\mathbb{K}$. A map $T\colon V\times V\rightarrow F$ is called \textit{positive semidefinite} if $T(v,v)\geq 0$ for every $v\in V$. If $T(u,v)=\overline{T(v,u)}$ for each $u,v\in V$ then $T$ is said to be \textit{conjugate symmetric}. We say that $T$ is \textit{sesquilinear} if
\begin{itemize}
\item[(1)] $T(\alpha u_{1}+\beta u_{2},v)=\alpha T(u_{1},v)+\beta T(u_{2},v)\ (\alpha,\beta\in\mathbb{K}, u_{1},u_{2},v\in V)$, and
\item[(2)] $T(u,\alpha v_{1}+\beta v_{2})=\bar{\alpha}T(u,v_{1})+\bar{\beta}T(u,v_{2})\ (\alpha,\beta\in\mathbb{K}, u,v_{1},v_{2}\in V)$.
\end{itemize}

\section{A Cauchy-Schwarz Inequality}\label{S:CSI}

We prove a Cauchy-Schwarz inequality for sesquilinear maps with a geometric mean closed Archimedean vector lattice over $\mathbb{K}$ as codomain (Theorem~\ref{T:CSI}) and with a geometric mean closed semiprime Archimedean $f$-algebra over $\mathbb{K}$ as codomain (Corollary~\ref{C:CSI}). A necessary and sufficient condition for equality in Theorem~\ref{T:CSI} and Corollary~\ref{C:CSI} is given via an explicit formula for the difference between the two sides in the Cauchy-Schwarz inequality of Theorem 3.1 mentioned above. However, Example~\ref{E:noclaeqco} illustrates that the condition for equality in the classical Cauchy-Schwarz inequality fails in Theorem~\ref{T:CSI} and Corollary~\ref{C:CSI}.

We proceed to the main result of this section.

\begin{theorem}\label{T:CSI} \textnormal{\textbf{(Cauchy-Schwarz Inequality)}}
Let $V$ be a vector space over $\mathbb{K}$, and suppose that $F$ is a geometric mean closed Archimedean vector lattice over $\mathbb{K}$. If a map $T\colon V\times V\rightarrow F$ is positive semidefinite, conjugate symmetric, and sesquilinear then
\begin{itemize}
	\item[(1)] $\underset{z\in\mathbb{K}\setminus\{ 0\}}{\inf}\{|z|^{-1}T(zu-v,zu-v)\}$ exists in $F\ (u,v\in V)$,
	\item[(2)] $|T(u,v)|=T(u,u)\boxtimes T(v,v)-2^{-1}\underset{z\in\mathbb{K}\setminus\{ 0\}}{\inf}\{|z|^{-1}T(zu-v,zu-v)\}\ (u,v\in V)$,
	\item[(3)] $|T(u,v)|\leq T(u,u)\boxtimes T(v,v)\ (u,v\in V)$, and
	\item[(4)] $|T(u,v)|=T(u,u)\boxtimes T(v,v)$ if and only if $\underset{z\in\mathbb{K}\setminus\{ 0\}}{\inf}\{|z|^{-1}T(zu-v,zu-v)\}=0$.
\end{itemize}
\end{theorem}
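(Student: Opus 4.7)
The plan is to reduce all four claims to a single identity extracted from positive semidefiniteness of $T$ evaluated at $zu-v$. First, using sesquilinearity together with conjugate symmetry (so that $\overline{T(u,v)}=T(v,u)$ and $zT(u,v)+\bar{z}\,\overline{T(u,v)}=2\text{Re}(zT(u,v))$), I would expand
\[
T(zu-v,zu-v)=|z|^{2}T(u,u)+T(v,v)-2\text{Re}\bigl(zT(u,v)\bigr)\qquad(z\in\mathbb{K}).
\]
For $z\neq 0$, writing $z=r\lambda$ with $r=|z|\in(0,\infty)$ and $|\lambda|=1$ and dividing by $|z|$ yields
\[
|z|^{-1}T(zu-v,zu-v)=\bigl(rT(u,u)+r^{-1}T(v,v)\bigr)-2\text{Re}\bigl(\lambda T(u,v)\bigr),
\]
and the right-hand side decouples into a radial part depending only on $r$ and an angular part depending only on $\lambda$.

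Next I would identify each summand with a known object. Since $T(u,u),T(v,v)\in F^{+}$ by positive semidefiniteness, geometric mean closedness of $F$ (i.e.\ of $F_{\rho}$) delivers
\[
\underset{r\in(0,\infty)}{\inf}\bigl\{rT(u,u)+r^{-1}T(v,v)\bigr\}=2\,T(u,u)\boxtimes T(v,v).
\]
For the angular part, the Mittelmeyer–Wolff modulus formula recorded in the preliminaries, namely $|f|=\sup\{\text{Re}(\lambda f):\lambda\in\mathbb{K},\,|\lambda|=1\}$ (which in the real case collapses to $f\vee(-f)$ and in the complex case is where square mean closedness of $F_{\rho}$ enters), gives
\[
\underset{\lambda\in\mathbb{K},\,|\lambda|=1}{\sup}\text{Re}\bigl(\lambda T(u,v)\bigr)=|T(u,v)|.
\]
Invoking the elementary poset fact that if $h(x,y)=f(x)+g(y)$ varies over independent parameters and $\inf f$, $\inf g$ both exist then $\inf h=\inf f+\inf g$, I then combine the two to produce both the existence assertion (1) and the identity
\[
\underset{z\in\mathbb{K}\setminus\{0\}}{\inf}\bigl\{|z|^{-1}T(zu-v,zu-v)\bigr\}=2\,T(u,u)\boxtimes T(v,v)-2|T(u,v)|,
\]
which is exactly (2).

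Finally, (3) is immediate because $T(zu-v,zu-v)\geq 0$ forces the infimum in (1) to be non-negative, and (4) is a direct rearrangement of (2). The only subtlety I anticipate is justifying the radial/angular separation — specifically, that the joint infimum over $z$ equals the sum of the two partial infima — but this reduces to the elementary poset lemma above once both partial infima are known to exist. The functional-calculus content is already packaged into the definitions of $\boxtimes$ and $|\cdot|$, so I do not foresee any deeper obstacle, and the argument treats the $\mathbb{R}$ and $\mathbb{C}$ cases uniformly.
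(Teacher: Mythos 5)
Your proposal is correct, and it rests on the same polarization identity as the paper's proof: expanding $T(zu-v,zu-v)$ via sesquilinearity and conjugate symmetry, recognizing the radial infimum as $2\,T(u,u)\boxtimes T(v,v)$ (geometric mean closedness) and the angular supremum as $2|T(u,v)|$ (the Mittelmeyer--Wolff modulus formula, i.e.\ square mean closedness in the complex case). Where you genuinely diverge is in how the joint infimum over $z\in\mathbb{K}\setminus\{0\}$ is evaluated. The paper first passes to the Dedekind completion $F^{\delta}$, takes the supremum over $\lambda\in S$ there (Dedekind completeness is what guarantees that the auxiliary supremum $\sup_{\lambda}\{-\theta^{-1}T(\theta\lambda u-v,\theta\lambda u-v)\}$ exists), then takes the infimum over $\theta\in(0,\infty)$, and only at the end observes that the resulting element lies in $F$ because it equals $2\bigl(T(u,u)\boxtimes T(v,v)-|T(u,v)|\bigr)$. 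You instead exploit the fact that after dividing by $|z|$ the expression decouples as $f(r)+g(\lambda)$ over the independent parameters $r\in(0,\infty)$ and $\lambda\in S$, and invoke the fact that in an ordered vector space $\inf_{x,y}\{f(x)+g(y)\}=\inf f+\inf g$ whenever both partial infima exist (this is correct: the sum is clearly a lower bound, and any lower bound $c$ satisfies $c-g(y)\leq\inf f$ for each $y$, hence $c-\inf f\leq\inf g$). Since both partial extrema exist in $F$ itself by hypothesis, your argument never leaves $F$ and dispenses with the Dedekind completion entirely; it also makes the radial/angular structure of the identity more transparent. The only point worth writing out carefully is the decoupling lemma itself, which you correctly flag; note that it uses translation invariance of the order, not merely the poset structure, but that is of course available here.
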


\begin{proof}
Suppose that $T\colon V\times V\rightarrow F$ is a positive semidefinite, conjugate symmetric, sesquilinear map. Consider $T$ as a map from $V\times V$ to $F^{\delta}$. Let $u,v\in V$, and put $\theta\in (0,\infty)$. Using the sesquilinearity of $T$ and the identity $\text{Re}(f)=2^{-1}(f+\bar{f})$ for $f\in F^{\delta}$, we obtain
\begin{align*}
T(\theta u-v,u-\theta^{-1}v)&=T(\theta u,u)+T(v,\theta^{-1}v)-T(\theta u,\theta^{-1}v)-T(v,u)\\
&=\theta T(u,u)+\theta^{-1}T(v,v)-2\text{Re}\bigl(T(u,v)\bigr).
\end{align*}
Therefore,
\begin{align*}
\text{Re}\bigl(T(u,v)\bigr)=2^{-1}\bigl(\theta T(u,u)+\theta^{-1}T(v,v)\bigr)-(2\theta)^{-1}T(\theta u-v,\theta u-v).
\end{align*}
In particular, for every $\lambda\in S=\{\lambda\in\mathbb{K}:|\lambda|=1\}$ we have
\begin{align*}
\text{Re}\bigl(\lambda T(u,v)\bigr)=\text{Re}\bigl(T(\lambda u,v)\bigr)=2^{-1}\bigl(\theta T(u,u)+\theta^{-1}T(v,v)\bigr)-(2\theta)^{-1}T(\theta\lambda u-v,\theta\lambda u-v).
\end{align*}
Thus we obtain
\begin{align*}
|T(u,v)|&=\underset{\lambda\in S}{\sup}\bigl\{\text{Re}\bigl(\lambda T(u,v)\bigr)\bigr\}\\
&=2^{-1}\underset{\lambda\in S}{\sup}\bigl\{\theta T(u,u)+\theta^{-1}T(v,v)-\theta^{-1}T(\theta\lambda u-v,\theta\lambda u-v)\bigr\},
\end{align*}
where the suprema above are in $F^{\delta}$. Since $-\theta^{-1}T(\theta\lambda u-v,\theta\lambda u-v)\leq 0\ (\lambda\in S)$, it follows that $\underset{\lambda\in S}{\sup}\bigl\{-\theta^{-1}T(\theta\lambda u-v,\theta\lambda u-v)\bigr\}$ exists in $F^{\delta}$. Moreover,
\begin{align*}
\underset{\lambda\in S}{\sup}&\bigl\{\theta T(u,u)+\theta^{-1}T(v,v)-\theta^{-1}T(\theta\lambda u-v,\theta\lambda u-v)\bigr\}\\
&=\theta T(u,u)+\theta^{-1}T(v,v)+\underset{\lambda\in S}{\sup}\bigl\{-\theta^{-1}T(\theta\lambda u-v,\theta\lambda u-v)\bigr\}\\
&=\theta T(u,u)+\theta^{-1}T(v,v)-\underset{\lambda\in S}{\inf}\bigl\{ \theta^{-1}T(\theta\lambda u-v,\theta\lambda u-v)\bigr\}.
\end{align*}
Therefore,
\begin{align*}
2^{-1}\bigl(\theta T(u,u)+\theta^{-1}T(v,v)\bigr)=|T(u,v)|+2^{-1}\underset{\lambda\in S}{\inf}\bigl\{\theta^{-1}T(\theta\lambda u-v,\theta\lambda u-v)\bigr\}.
\end{align*}
Since $\underset{\lambda\in S}{\inf}\bigl\{\theta^{-1}T(\theta\lambda u-v,\theta\lambda u-v)\bigr\}\geq 0\ (\theta\in(0,\infty))$, it follows that
\begin{align*}
\underset{\theta\in(0,\infty)}{\inf}\Bigl\{\underset{\lambda\in S}{\inf}\{\theta^{-1}T(\theta\lambda u-v,\theta\lambda u-v)\}\Bigr\}
\end{align*}
exists in $F^{\delta}$. Then with the infima still in $F^{\delta}$,

\begin{align*}
T(u,u)\boxtimes T(v,v)&=\underset{\theta\in(0,\infty)}{\inf}\Bigl\{|T(u,v)|+2^{-1}\underset{\lambda\in S}{\inf}\{\theta^{-1}T(\theta\lambda u-v,\theta\lambda u-v)\}\Bigr\}\\
&=|T(u,v)|+2^{-1}\underset{\theta\in(0,\infty)}{\inf}\Bigl\{\underset{\lambda\in S}{\inf}\{\theta^{-1}T(\theta\lambda u-v,\theta\lambda u-v)\}\Bigr\}\\
&=|T(u,v)|+2^{-1}\underset{\lambda\in S,\theta\in(0,\infty)}{\inf}\{\theta^{-1}T(\theta\lambda u-v,\theta\lambda u-v)\}\\
&=|T(u,v)|+2^{-1}\underset{z\in\mathbb{K}\setminus\{ 0\}}{\inf}\{|z|^{-1}T(zu-v,zu-v)\}.
\end{align*}
Thus, now in $F$, the equality
\begin{align*}
2^{-1}\underset{z\in\mathbb{K}\setminus\{ 0\}}{\inf}\{|z|^{-1}T(zu-v,zu-v)\}=\bigl(T(u,u)\boxtimes T(v,v)-|T(u,v)|\bigr)\
\end{align*}
authenticates statement (1) of this theorem and implies (in $F$)
\begin{align*}
|T(u,v)|=T(u,u)\boxtimes T(v,v)-2^{-1}\underset{z\in\mathbb{K}\setminus\{ 0\}}{\inf}\{|z|^{-1}T(zu-v,zu-v)\}.
\end{align*}
Therefore, part (2) is verified. Statements (3) and (4) immediately follow from (2). 
\end{proof}

As a consequence of Theorem~\ref{T:CSI} and Proposition~\ref{P:gmfa}, we obtain the following.

\begin{corollary}\label{C:CSI}
Let $V$ be a vector space over $\mathbb{K}$, and suppose that $A$ is a geometric mean closed semiprime Archimedean $f$-algebra over $\mathbb{K}$. If $T\colon V\times V\rightarrow A$ is a positive semidefinite, conjugate symmetric, sesquilinear map then
\begin{itemize}
	\item[(1)] $\underset{z\in\mathbb{K}\setminus\{ 0\}}{\inf}\{|z|^{-1}T(zu-v,zu-v)\}$ exists in $A\ (u,v\in V)$,
	\item[(2)] $|T(u,v)|^{2}=\Bigl(\bigl(T(u,u)T(v,v)\bigr)^{1/2}-2^{-1}\underset{z\in\mathbb{K}\setminus\{ 0\}}{\inf}\{|z|^{-1}T(zu-v,zu-v)\}\Bigr)^{2}\ (u,v\in V)$,
	\item[(3)] $|T(u,v)|^{2}\leq T(u,u)T(v,v)\ (u,v\in V)$, and
	\item[(4)] $|T(u,v)|^{2}=T(u,u)T(v,v)$ if and only if $\underset{z\in\mathbb{K}\setminus\{ 0\}}{\inf}\{|z|^{-1}T(zu-v,zu-v)\}=0$. 
\end{itemize}
\end{corollary}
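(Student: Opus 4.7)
The plan is to deduce Corollary~\ref{C:CSI} directly from Theorem~\ref{T:CSI} by means of Proposition~\ref{P:gmfa}. Since $A$ is a geometric mean closed Archimedean vector lattice over $\mathbb{K}$, Theorem~\ref{T:CSI} applies verbatim to $T$ with codomain $A$. Moreover, $A$ being semiprime and geometric mean closed, Proposition~\ref{P:gmfa} yields $f\boxtimes g=(fg)^{1/2}$ for all $f,g\in A^{+}$; taking $f=T(u,u)$ and $g=T(v,v)$, both of which lie in $A^{+}$ by positive semidefiniteness, converts the geometric-mean expression $T(u,u)\boxtimes T(v,v)$ appearing in Theorem~\ref{T:CSI} into the algebraic square root $\bigl(T(u,u)T(v,v)\bigr)^{1/2}$.

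With this dictionary in hand, statement (1) is simply statement (1) of Theorem~\ref{T:CSI}. For statement (2), I would start from Theorem~\ref{T:CSI}(2), which upon the substitution above reads
\[
|T(u,v)|=\bigl(T(u,u)T(v,v)\bigr)^{1/2}-2^{-1}\underset{z\in\mathbb{K}\setminus\{ 0\}}{\inf}\{|z|^{-1}T(zu-v,zu-v)\},
\]
and then square both sides. For statement (3), I would avoid expanding this square and instead square the inequality $|T(u,v)|\leq\bigl(T(u,u)T(v,v)\bigr)^{1/2}$ from Theorem~\ref{T:CSI}(3); this relies only on the standard fact that, in any $f$-algebra, $0\leq a\leq b$ implies $a^{2}\leq b^{2}$, via $b^{2}-a^{2}=(b-a)(b+a)\geq 0$ and positivity of products of positive elements.

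For statement (4), Theorem~\ref{T:CSI}(4) already characterizes the vanishing of the infimum by the equality $|T(u,v)|=\bigl(T(u,u)T(v,v)\bigr)^{1/2}$, so it remains only to show that this equality is equivalent to its squared form $|T(u,v)|^{2}=T(u,u)T(v,v)$. One direction is immediate by squaring, and the converse uses the uniqueness of positive $n$th roots in a semiprime Archimedean $f$-algebra, cited earlier in the paper via \cite[Proposition 2(ii)]{BeuHui}. I expect no genuine obstacle; the corollary is essentially a translation of Theorem~\ref{T:CSI} through Proposition~\ref{P:gmfa}, requiring only these routine facts about squaring and square roots in semiprime Archimedean $f$-algebras to pass between the vector-lattice and algebraic forms.
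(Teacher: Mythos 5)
Your proposal is correct and follows essentially the same route as the paper, which presents the corollary as an immediate consequence of Theorem~\ref{T:CSI} and Proposition~\ref{P:gmfa} (noting only that statement (3) follows from statement (2) together with the monotonicity of squaring on the positive cone, cited from \cite{BeuHui}). Your elementary factorization $b^{2}-a^{2}=(b-a)(b+a)\geq 0$ for (3) and your appeal to uniqueness of positive square roots in semiprime Archimedean $f$-algebras for (4) supply exactly the routine facts the paper leaves implicit.
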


Statement (3) in Corollary~\ref{C:CSI} follows from part (2) of the corollary as well as \cite[Proposition 2(iii)]{BeuHui}. For $\mathbb{K}=\mathbb{R}$, this statement is contained in \cite[Corollary 4]{BusvR4}. Similarly, the given statement is contained in the complex analogue of \cite[Corollary 4]{BusvR4} (mentioned in the introduction) when $\mathbb{K}=\mathbb{C}$. Part (2) of Corollary~\ref{C:CSI}, however, depends on the uniqueness of square roots, which implies the semiprime property in Archimedean almost $f$-algebras. Indeed, let $A$ be an Archimedean almost $f$-algebra and suppose $a^{2}=b^{2}$ implies $a=b\ (a,b\in A^{+})$. Let $a$ be a nilpotent in $A$. Then $a^{3}=0$ (see \cite[Theorem 3.2]{BerHui2}), and thus $a^{4}=0$. Using the uniqueness of square roots twice, we obtain $a=0$. Finally, every semiprime almost $f$-algebra is automatically an $f$-algebra (\cite[Theorem 1.11(i)]{BerHui2}).

The special case where $A=\mathbb{K}$ in the inequality of Corollary~\ref{C:CSI} is the classical Cauchy-Schwarz inequality. Thus we know in this special case that $|T(u,v)|^{2}=T(u,u)T(v,v)$ if and only if there exist $\alpha,\beta\in\mathbb{K}$, not both zero, such that $T(\beta u+\alpha v,\beta u+\alpha v)=0$ (see, e.g., \cite[page 3]{Con}). This criterion no longer holds for Theorem~\ref{T:CSI} nor Corollary~\ref{C:CSI}.

\begin{example}\label{E:noclaeqco}
Define $T\colon \mathbb{K}^{2}\times\mathbb{K}^{2}\rightarrow\mathbb{K}^{2}$ by
\begin{align*}
T\bigl((z_{1},z_{2}),(w_{1},w_{2})\bigr)=(z_{1}\bar{w_{1}},z_{2}\bar{w_{2}})\ \bigl((z_{1},z_{2}),(w_{1},w_{2})\in\mathbb{K}^{2}\bigr).
\end{align*}
Since $\mathbb{C}^{2}=\mathbb{R}^{2}+i\mathbb{R}^{2}$, we see that $\mathbb{K}^{2}$ is a geometric mean closed semiprime Archimedean $f$-algebra over $\mathbb{K}$ with respect to the coordinatewise vector space operations, coordinatewise ordering, and coordinatewise multiplication. Also, $T$ is a positive semidefinite, conjugate symmetric, sesquilinear map. Note that 
\[
|T\bigl((1,0),(0,1)\bigr)|^{2}=T\bigl((1,0),(1,0)\bigr)T\bigl((0,1),(0,1)\bigr).
\]
Suppose there exist $\alpha,\beta\in\mathbb{K}$, not both zero, for which
\[
T\bigl(\beta(1,0)+\alpha(0,1),\beta(1,0)+\alpha(0,1)\bigr)=0.
\]
Then $(|\beta|^{2},|\alpha|^{2})=(0,0)$, a contradiction.
\end{example}

\section{A H\"older Inequality}\label{S:HI}

We prove a H\"older inequality for positive linear maps between weighted geometric mean closed Archimedean $\Phi$-algebras over $\mathbb{K}$ in this section, extending \cite[Theorem 5, Corollary 6]{Bo2} by Boulabiar and \cite[Theorem 3.12]{Tou} by Toumi. We begin with some definitions.

Let $A$ be an Archimedean $f$-algebra over $\mathbb{K}$, and suppose that $n\in\mathbb{N}$. As usual, we write $\prod\limits_{k=1}^{n}a_{k}=a_{1}\cdots a_{n}$ for $a_{1},\dots,a_{n}\in A$.

For every $r_{1},\dots,r_{n}\in(0,1)$ such that $\sum\limits_{k=1}^{n}r_{k}=1$, we define a \textit{weighted geometric mean} $\gamma_{r_{1},\dots,r_{n}}:\mathbb{R}^{n}\rightarrow\mathbb{R}$ by
\begin{align*}
\gamma_{r_{1},\dots,r_{n}}(x_{1},\dots,x_{n})=\prod\limits_{k=1}^{n}|x_{k}|^{r_{k}}\ (x_{1},\dots,x_{n}\in\mathbb{R}).
\end{align*}
The weighted geometric means are concave on $(\mathbb{R}^{+})^{n}$ as well as continuous and positively homogeneous on $\mathbb{R}^{n}$. Moreover, for each $r_{1},\dots,r_{n}\in(0,1)$ with $\sum\limits_{k=1}^{n}r_{k}=1$, it follows from \cite[Lemma 3.6(iii)]{BusSch} that
\begin{align*}
\gamma_{r_{1},\dots,r_{n}}(x_{1},\dots,x_{n})=\inf\Bigl\{\sum\limits_{k=1}^{n}r_{k}\theta_{k}x_{k}:\theta_{k}\in(0,\infty),\ \prod\limits_{k=1}^{n}\theta_{k}^{r_{k}}=1\Bigr\}\ (x_{1},\dots,x_{n}\in\mathbb{R}^{+}).
\end{align*}

An Archimedean vector lattice $E$ over $\mathbb{K}$ is said to be \textit{weighted geometric mean closed} if $\inf\Bigl\{\sum\limits_{k=1}^{n}r_{k}\theta_{k}|f_{k}|:\theta_{k}\in(0,\infty),\ \prod\limits_{k=1}^{n}\theta_{k}^{r_{k}}=1\Bigr\}$ exists in $E$ for every $f_{1},\dots,f_{n}\in E$ and every $r_{1},\dots,r_{n}\in(0,1)$ with $\sum\limits_{k=1}^{n}r_{k}=1$. In this case, we write
\begin{align*}
\bigtriangleup\limits_{k=1}^{n}(f_{k},r_{k})=\inf\Bigl\{\sum\limits_{k=1}^{n}r_{k}\theta_{k}|f_{k}|:\theta_{k}\in(0,\infty),\ \prod\limits_{k=1}^{n}\theta_{k}^{r_{k}}=1\Bigr\}\ (f_{1},\dots,f_{n}\in E).
\end{align*}

Let $(X,\mathcal{M},\mu)$ be a measure space, and let $p\in(1,\infty)$. It follows from \cite[(HI$_{1}$)]{Mali} by Maligranda that $|f|^{1/p}|g|^{1-1/p}\in L_{1}(X,\mu)$ for $f,g\in L_{1}(X,\mu)$, and
\begin{align*}
||\ |f|^{1/p}|g|^{1-1/p}\ ||_{1}\leq||f||_{1}^{1/p}||g||_{1}^{1-1/p}.
\end{align*}
Following Maligranda's proof, we redevelop and extend \cite[(HI$_{1}$)]{Mali} to a multivariate version in the setting of positive operators between vector lattices.

\begin{proposition}\label{P:Mali&Me}
Let $E$ and $F$ be weighted geometric mean closed Archimedean vector lattices over $\mathbb{K}$, and suppose that $r_{1},\dots,r_{n}\in(0,1)$ satisfy $\sum\limits_{k=1}^{n}r_{k}=1$.
\begin{itemize}
\item[(1)] For each positive linear map $T\colon E\rightarrow F$,
\begin{align*}
T\Bigl(\bigtriangleup\limits_{k=1}^{n}(f_{k},r_{k})\Bigr)\leq\bigtriangleup\limits_{k=1}^{n}\bigl(T(|f_{k}|),r_{k}\bigr)\ (f_{1},\dots,f_{n}\in E).
\end{align*}
\item[(2)] If $T\colon E\rightarrow F$ is a linear map then $T$ is a vector lattice homomorphism if and only if
\begin{align*}
T\Bigl(\bigtriangleup\limits_{k=1}^{n}(f_{k},r_{k})\Bigr)=\bigtriangleup\limits_{k=1}^{n}\bigl(T(f_{k}),r_{k}\bigr)\ (f_{1},\dots,f_{n}\in E).
\end{align*}
\item[(3)] If $G$ is a (not necessarily weighted geometric mean closed) vector sublattice of $E$, $T\colon G\rightarrow F$ is a vector lattice homomorphism, and $f_{1},\dots,f_{n},\bigtriangleup\limits_{k=1}^{n}(f_{k},r_{k})\in G$ then
\[
T\Bigl(\bigtriangleup\limits_{k=1}^{n}(f_{k},r_{k})\Bigr)=\bigtriangleup\limits_{k=1}^{n}\bigl(T(f_{k}),r_{k}\bigr).
\]
\end{itemize}
\end{proposition}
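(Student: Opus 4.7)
The plan is to treat (1) by a direct infimum manipulation, (2) by reducing the easy direction to the classical modulus characterization and the hard direction to a functional-calculus/Yosida argument, and (3) by specializing the (2) argument to the finitely generated sublattice in which $\bigtriangleup$ already lives. For (1), I would fix any admissible tuple $(\theta_{1},\dots,\theta_{n})$ with $\theta_{k}\in(0,\infty)$ and $\prod_{k=1}^{n}\theta_{k}^{r_{k}}=1$. The defining infimum in $E$ yields $\bigtriangleup\limits_{k=1}^{n}(f_{k},r_{k})\leq\sum_{k=1}^{n}r_{k}\theta_{k}|f_{k}|$, and applying the positive linear map $T$ and commuting past the finite sum gives $T\bigl(\bigtriangleup\limits_{k=1}^{n}(f_{k},r_{k})\bigr)\leq\sum_{k=1}^{n}r_{k}\theta_{k}T(|f_{k}|)$ in $F$. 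Taking the infimum of the right-hand side over admissible tuples, which exists in $F$ by weighted geometric mean closedness, then finishes (1).

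For (2), the ($\Leftarrow$) direction is an easy specialization: take $n=2$, $r_{1}=r_{2}=1/2$, and $f_{1}=f_{2}=f\in E$, so that an AM--GM computation gives $\bigtriangleup\limits_{k=1}^{2}(f_{k},r_{k})=|f|$, and the hypothesized identity collapses to $T(|f|)=|T(f)|$, which together with linearity characterizes vector lattice homomorphisms. For ($\Rightarrow$), one inequality is immediate from (1) upon noting $T(|f_{k}|)=|T(f_{k})|$. The reverse inequality is the main obstacle, because the defining infimum ranges over an infinite parameter set and vector lattice homomorphisms need not be order-continuous. My plan is to exploit that $\gamma_{r_{1},\dots,r_{n}}$ is continuous and positively homogeneous, so that the formula cited from \cite[Lemma 3.6(iii)]{BusSch} identifies $\bigtriangleup\limits_{k=1}^{n}(f_{k},r_{k})$ with a functional-calculus expression in $f_{1},\dots,f_{n}$ which a vector lattice homomorphism is forced to preserve. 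Concretely, I would pass to the finitely generated vector sublattice of $E$ containing $f_{1},\dots,f_{n}$ and $\bigtriangleup\limits_{k=1}^{n}(f_{k},r_{k})$, realize it as a sublattice of some $C(K)$ via Yosida representation so that the infimum coincides pointwise with the weighted geometric mean of continuous functions, and transport the identity through the continuous spectral map induced by $T$.

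Part (3) will follow from the same representation-based argument: the finitely generated sublattice of $E$ generated by $f_{1},\dots,f_{n}$ and $\bigtriangleup\limits_{k=1}^{n}(f_{k},r_{k})$ already lies in $G$ by hypothesis, so the restriction of $T$ to it is still a vector lattice homomorphism, and the functional-calculus identity transports to $F$ without requiring $G$ itself to be weighted geometric mean closed. The common difficulty in both (2)($\Rightarrow$) and (3) is thus the preservation of the defining infimum by a possibly non-order-continuous homomorphism, which forces the use of representation-theoretic machinery rather than a purely order-theoretic manipulation.
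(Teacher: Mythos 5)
Your overall route matches the paper's: part (1) is proved by exactly the same infimum manipulation, and parts (2) and (3) in the paper also rest on identifying $\bigtriangleup_{k=1}^{n}(f_{k},r_{k})$ with the functional-calculus element $\gamma_{r_{1},\dots,r_{n}}(f_{1},\dots,f_{n})$ and using that vector lattice homomorphisms commute with functional calculus (the paper cites \cite[Theorems 3.7(2) and 3.11]{BusSch} for this, and for (3) passes to the $\mathcal{D}$-completion of $G$ rather than to a Yosida representation). Two points need repair. First, in the ($\Leftarrow$) direction of (2) you are not entitled to specialize to $n=2$ and $r_{1}=r_{2}=1/2$: the hypothesis is an identity for the \emph{fixed} weights $r_{1},\dots,r_{n}$ given in the statement. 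The fix is immediate and is what the paper does: set $f_{1}=\dots=f_{n}=f$ with the given weights; the weighted AM--GM inequality $\sum r_{k}\theta_{k}\geq\prod\theta_{k}^{r_{k}}=1$ still gives $\bigtriangleup_{k=1}^{n}(f,r_{k})=|f|$, hence $T(|f|)=|T(f)|$.

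Second, and more substantively, the sentence ``realize it as a sublattice of some $C(K)$ \dots\ so that the infimum coincides pointwise with the weighted geometric mean'' asserts precisely the nontrivial crux rather than proving it. An order infimum of an infinite family in a vector sublattice of $C(K)$ is in general \emph{not} the pointwise infimum; one only gets $g\leq\inf_{\mathrm{ptwise}}$ for the greatest lower bound $g$, and the reverse inequality requires an argument (this is where concavity of $\gamma_{r_{1},\dots,r_{n}}$ and the functional-calculus construction enter; it is exactly the content of \cite[Theorem 3.7(2)]{BusSch}, which the paper invokes). Without that identification your transport of the infimum through a possibly non-order-continuous homomorphism --- the difficulty you yourself correctly isolate --- is not yet established. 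A minor further remark: the paper deliberately routes everything through the intrinsic functional calculus and the $\mathcal{D}$-completion of $G$ instead of a full Yosida representation, in order to keep the argument within (at most) the countable Axiom of Choice; your representation-theoretic variant works but sacrifices that feature unless you restrict to countably generated sublattices and say so.
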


\begin{proof}
(1) Assume $T\colon E\rightarrow F$ is a positive linear map. Let $f_{1},\dots,f_{n}\in E$, and suppose $\theta_{1},\dots,\theta_{n}\in(0,\infty)$ are such that $\prod\limits_{k=1}^{n}\theta_{k}^{r_{k}}=1$. From the positivity and linearity of $T$ we have
\begin{align*}
T\Bigl(\bigtriangleup\limits_{k=1}^{n}(f_{k},r_{k})\Bigr)\leq T\Bigl(\sum\limits_{k=1}^{n}r_{k}\theta_{k}|f_{k}|\Bigr)=\sum\limits_{k=1}^{n}r_{k}\theta_{k}T(|f_{k}|).
\end{align*}
Then
\begin{align*}
T\Bigl(\bigtriangleup\limits_{k=1}^{n}(f_{k},r_{k})\Bigr)\leq\inf\Bigl\{\sum\limits_{k=1}^{n}r_{k}\theta_{k}T(|f_{k}|):\theta_{k}\in(0,\infty),\ \prod\limits_{k=1}^{n}\theta_{k}^{r_{k}}=1\Bigr\}.
\end{align*}

(2) Suppose $T\colon E\rightarrow F$ is a vector lattice homomorphism. It follows from  \cite[Theorem 3.7(2)]{BusSch} that $\gamma_{r_{1},\dots,r_{n}}(f_{1},\dots,f_{n})$, which is defined via functional calculus \cite[Definition 3.1]{BusdPvR}, exists in $E$ for every $f_{1},\dots,f_{n}\in E^{+}$ and
\[
\gamma_{r_{1},\dots,r_{n}}(f_{1},\dots,f_{n})=\bigtriangleup\limits_{k=1}^{n}(f_{k},r_{k})\ (f_{1},\dots,f_{n}\in E^{+}).
\]
It is readily checked using \cite[Definition 3.1]{BusdPvR} and the identity
\[
\gamma_{r_{1},\dots,r_{n}}(x_{1},\dots,x_{n})=\gamma_{r_{1},\dots,r_{n}}(|x_{1}|,\dots,|x_{n}|)\ (x_{1},\dots,x_{n}\in\mathbb{R})
\]
that $\gamma_{r_{1},\dots,r_{n}}(f_{1},\dots,f_{n})$ exists in $E$ for all $f_{1},\dots,f_{n}\in E$ and
\[
\gamma_{r_{1},\dots,r_{n}}(f_{1},\dots,f_{n})=\gamma_{r_{1},\dots,r_{n}}(|f_{1}|,\dots,|f_{n}|)\ (f_{1},\dots,f_{n}\in E).
\]
It follows that
\[
\gamma_{r_{1},\dots,r_{n}}(f_{1},\dots,f_{n})=\bigtriangleup\limits_{k=1}^{n}(f_{k},r_{k})\ (f_{1},\dots,f_{n}\in E).
\]
By \cite[Theorem 3.11]{BusSch} (second equality), we have for all $f_{1},\dots,f_{n}\in E$,
\begin{align*}
T\bigl(\bigtriangleup\limits_{k=1}^{n}(f_{k},r_{k})\bigr)&=T\bigl(\gamma_{r_{1},\dots,r_{n}}(f_{1},\dots,f_{n})\bigr)\\
&=\gamma_{r_{1},\dots,r_{n}}\bigl(T(f_{1}),\dots,T(f_{n})\bigr)=\bigtriangleup\limits_{k=1}^{n}\bigl(T(f_{k}),r_{k}\bigr).
\end{align*}
On the other hand, assume $T\colon E\rightarrow F$ is a linear map and
\begin{align*}
T\Bigl(\bigtriangleup\limits_{k=1}^{n}(f_{k},r_{k})\Bigr)=\bigtriangleup\limits_{k=1}^{n}\bigl(T(f_{k}),r_{k}\bigr)\ (f_{1},\dots,f_{n}\in E).
\end{align*}
From \cite[Theorem 3.11]{BusSch}, we conclude that
\begin{align*}
T\bigl(\gamma_{r_{1},\dots,r_{n}}(f_{1},\dots,f_{n})\bigr)=\gamma_{r_{1},\dots,r_{n}}\bigl(T(f_{1}),\dots,T(f_{n})\bigr)\ (f_{1},\dots,f_{n}\in E),
\end{align*}
and (since $\gamma_{r_{1},\dots,r_{n}}(x,\dots,x)=|x|\ (x\in\mathbb{R})$) that $T$ is a vector lattice homomorphism.

(3) Let $G$ be a vector sublattice of $E$, and let $T:G\rightarrow F$ be a vector lattice homomorphism. Let $\mathcal{D}$ be the collection of all weighted geometric means. That is, let
\[
\mathcal{D}=\left\{\gamma_{r_{1},\dots,r_{n}}:r_{1},\dots,r_{n}\in(0,1)\ \text{and}\ \sum\limits_{k=1}^{n}r_{k}=1\right\}.
\]
It follows from \cite[Theorem 3.7(2)]{BusSch} that $F$ is $\mathcal{D}$-complete (see \cite[Definition 3.2]{BusSch}). By \cite[Theorem 3.17]{BusSch}, $T$ uniquely extends to a vector lattice homomorphism $T^{\mathcal{D}}:G^{\mathcal{D}}\rightarrow F$, where $G^{\mathcal{D}}$ denotes the $\mathcal{D}$-completion of $G$ (see \cite[Definition 3.10]{BusSch}). Note that $G^{\mathcal{D}}$ is $\mathcal{D}$-complete by definition. Thus \cite[Theorem 3.7(2)]{BusSch} implies that $G^{\mathcal{D}}$ is weighted geometric mean closed. An appeal to (2) now verifies (3).
\end{proof}

Let $A$ be an Archimedean $\Phi$-algebra over $\mathbb{K}$. If $A$ is uniformly complete then $a^{1/n}$ exists in $A$ for every $a\in A^{+}$ and every $n\in\mathbb{N}$ (see \cite[Corollary 6]{BeuHui}). It follows that $a^{q}$ exists in $A$ for every $a\in A^{+}$ and every $q\in\mathbb{Q}\cap(0,\infty)$. The assumption of uniform completeness in \cite[Corollary 6]{BeuHui} can be weakened, which is the content of our next lemma.

\begin{lemma}\label{L:a^q}
Let $A$ be a weighted geometric mean closed Archimedean $\Phi$-algebra over $\mathbb{K}$. Then $a^{q}$ exists in $A$ for all $a\in A^{+}$ and $q\in\mathbb{Q}\cap(0,\infty)$. Furthermore, if $q_{1},\dots,q_{n}\in\mathbb{Q}\cap(0,1)$ are such that $\sum\limits_{k=1}^{n}q_{k}=1$ then for every $a_{1},\dots,a_{n}\in A^{+}$,
\begin{align*}
\prod\limits_{k=1}^{n}a_{k}^{q_{k}}=\bigtriangleup\limits_{k=1}^{n}(a_{k},q_{k})\in A.
\end{align*}
\end{lemma}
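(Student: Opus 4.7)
The plan is to reduce everything to a separation argument inside the real semiprime $\Phi$-algebra $A_{\rho}$, noting that $A^{+}=A_{\rho}^{+}$ so all quantities of interest live there, and that $A_{\rho}$ inherits weighted geometric mean closedness from $A$. The three key tools are: (i) nonzero multiplicative vector lattice homomorphisms $\phi\colon C\to\mathbb{R}$ separate the points of any finitely generated $f$-subalgebra $C$ of $A_{\rho}$ (\cite[Corollary 2.7]{BusdPvR}, as used in the proof of Proposition~\ref{P:gmfa}); (ii) every such $\phi$ commutes with the operator $\bigtriangleup$ by Proposition~\ref{P:Mali&Me}(3); and (iii) uniqueness of $N$-th roots in semiprime $f$-algebras (\cite[Proposition 2(ii)]{BeuHui}).

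First I establish existence of $a^{q}$ for $a\in A^{+}$ and $q\in\mathbb{Q}\cap(0,1)$. Writing $q=m/N$ with $m,N\in\mathbb{N}$ and letting $e$ denote the multiplicative identity of $A$, set $b:=\bigtriangleup\limits_{k=1}^{2}(f_{k},r_{k})$ with $f_{1}=a$, $r_{1}=q$, $f_{2}=e$, $r_{2}=1-q$; this exists in $A^{+}$ by hypothesis. The goal is to prove $b^{N}=a^{m}$, for then $b=(a^{m})^{1/N}=a^{q}$ by uniqueness of roots. Taking $C$ to be the $f$-subalgebra of $A_{\rho}$ generated by $\{a,b,e\}$ and $\phi\colon C\to\mathbb{R}$ a nonzero multiplicative vector lattice homomorphism, the identity $e^{2}=e$ forces $\phi(e)\in\{0,1\}$, and $\phi(e)=0$ would render $\phi$ identically zero, so $\phi(e)=1$. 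Proposition~\ref{P:Mali&Me}(3) then yields $\phi(b)=\gamma_{q,1-q}(\phi(a),1)=\phi(a)^{q}$, hence $\phi(b^{N})=\phi(a)^{m}=\phi(a^{m})$, and separation delivers $b^{N}=a^{m}$. For general $q\in\mathbb{Q}\cap(0,\infty)$, write $q=k+r$ with $k\in\mathbb{N}_{0}$ and $r\in\mathbb{Q}\cap[0,1)$; then $a^{k}a^{r}$ (with $a^{0}:=e$ and $a^{r}$ obtained above when $r>0$) is the required $a^{q}$, via a direct computation of its $N'$-th power for $N'$ chosen so that $qN'\in\mathbb{N}$.

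For the product formula, fix $a_{1},\dots,a_{n}\in A^{+}$ and rationals $q_{k}\in(0,1)$ with $\sum_{k=1}^{n}q_{k}=1$, and let $N$ be a common denominator. Put $b:=\bigtriangleup\limits_{k=1}^{n}(a_{k},q_{k})$ and $c:=\prod_{k=1}^{n}a_{k}^{q_{k}}$, both in $A^{+}$ (the latter by the existence result just obtained). Let $C$ be the $f$-subalgebra of $A_{\rho}$ generated by $\{a_{1},\dots,a_{n},b,a_{1}^{q_{1}},\dots,a_{n}^{q_{n}}\}$. For any nonzero multiplicative vector lattice homomorphism $\phi\colon C\to\mathbb{R}$, Proposition~\ref{P:Mali&Me}(3) gives $\phi(b)=\prod_{k=1}^{n}\phi(a_{k})^{q_{k}}$, while the argument of the previous paragraph (applied to each factor $a_{k}^{q_{k}}$) yields $\phi(a_{k}^{q_{k}})=\phi(a_{k})^{q_{k}}$, so multiplicativity of $\phi$ produces $\phi(c)=\prod_{k=1}^{n}\phi(a_{k})^{q_{k}}=\phi(b)$. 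Separation then forces $b=c$, which is the desired formula.

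The main obstacle is more bookkeeping than depth: I must ensure that the separation theorem of \cite{BusdPvR} and the functional-calculus machinery underlying Proposition~\ref{P:Mali&Me}(3) are applied inside $A_{\rho}$ (since these tools are formulated for real $f$-algebras, and $A^{+}\subseteq A_{\rho}$), and that $\bigtriangleup\limits_{k=1}^{n}(a_{k},q_{k})$ and each $a_{k}^{q_{k}}$ are explicitly included among the generators of $C$ so that Proposition~\ref{P:Mali&Me}(3) is actually applicable on $C$.
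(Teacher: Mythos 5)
Your proposal is correct and follows essentially the same route as the paper: reduce to finitely generated $\Phi$-subalgebras of $A_{\rho}$, apply Proposition~\ref{P:Mali&Me}(3) to compute the image of the relevant infimum under a nonzero multiplicative vector lattice homomorphism, invoke the separation property of \cite[Corollary 2.7]{BusdPvR}, and use uniqueness of roots in semiprime $f$-algebras. The only differences are cosmetic --- you build $a^{m/N}$ in one step rather than reducing to $a^{1/n}$, and you write out the product-formula argument that the paper dismisses as ``similar'' (just make sure $e$ is among the generators of $C$ there too, so that $a_{k}^{q_{k}}=\bigtriangleup\bigl((a_{k},q_{k}),(e,1-q_{k})\bigr)$ can be handled by Proposition~\ref{P:Mali&Me}(3) inside $C$).
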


\begin{proof}
Denote the unit element of $A$ by $e$, and let $a\in A^{+}$. In order to prove that $a^{q}$ is defined in $A$ for every $q\in\mathbb{Q}\cap(0,\infty)$, it suffices to verify that $a^{1/n}$ exists in $A$ for all $n\in\mathbb{N}$. To this end, let $n\in\mathbb{N}\setminus\{1\}$. Let $C$ be the Archimedean $\Phi$-subalgebra of $A_{\rho}$ generated by  the elements $a\in A^{+}$ and
\[
b=\inf\{n^{-1}\theta_{1}a+(1-n^{-1})\theta_{2} e:\theta_{1},\theta_{2}\in(0,\infty),\ \theta_{1}^{1/n}\theta_{2}^{1-1/n}=1\}\in A^{+}.
\]
Suppose that $\omega\colon C\rightarrow\mathbb{R}$ is a nonzero multiplicative vector lattice homomorphism. It follows that $\omega(e)=1$. Using Proposition~\ref{P:Mali&Me}(3) (third equality), we obtain
\begin{align*}
\omega(b^{n})&=\omega(b)^{n}\\
&=\big(\omega(\inf\{n^{-1}\theta_{1}a+(1-n^{-1})\theta_{2} e:\theta_{1},\theta_{2}\in(0,\infty),\ \theta_{1}^{1/n}\theta_{2}^{1-1/n}=1\})\bigr)^{n}\\
&=\big(\inf\{n^{-1}\theta_{1}\omega(a)+(1-n^{-1})\theta_{2} :\theta_{1},\theta_{2}\in(0,\infty),\ \theta_{1}^{1/n}\theta_{2}^{1-1/n}=1\})\bigr)^{n}\\
&=\Bigl(\gamma_{\frac{1}{n},\frac{n-1}{n}}\bigl(\omega(a),1\bigr)\Bigr)^{n}=\Bigl(\bigl(\omega(a)\bigr)^{1/n}\Bigr)^{n}=\omega(a).
\end{align*}
Since the set of all nonzero multiplicative vector lattice homomorphisms $\omega\colon C\rightarrow\mathbb{R}$ separates the points of $C$ (see \cite[Corollary 2.7]{BusdPvR}), we have in $A_{\rho}$ that $b^{n}=a$. But then $a^{1/n}=b$.

Finally, a similar proof verifies that
\begin{align*}
\prod\limits_{k=1}^{n}a_{k}^{q_{k}}=\bigtriangleup\limits_{k=1}^{n}(a_{k},q_{k})
\end{align*}
for every $a_{1},\dots,a_{n}\in A^{+}$ and all $q_{1},\dots,q_{n}\in\mathbb{Q}\cap(0,1)$ such that $\sum_{k=1}^{n}q_{k}=1$.
\end{proof}

We next use the proof of Lemma~\ref{L:a^q} as a guide to define strictly positive irrational powers of positive elements in weighted geometric mean closed Archimedean $\Phi$-algebras in an intrinsic manner that does not require representation theory dependent on more than the countable Axiom of Choice. For $r\in(0,\infty)$, define
\begin{align*}
\lfloor r\rfloor=\max\{n\in\mathbb{N}\cup\{0\}:n\leq r\}\ \text{and}\ \tilde{r}=r-\lfloor r\rfloor.
\end{align*}

\begin{definition}\label{D:a^r}
Suppose that $A$ is a weighted geometric mean closed Archimedean $\Phi$-algebra over $\mathbb{K}$. Let $e$ be the unit element of $A$. For $a\in A^{+}$ and $r\in(0,\infty)$, define
\begin{align*}
a^{r}=a^{\lfloor r\rfloor}\inf\{\tilde{r}\theta_{1}a+(1-\tilde{r})\theta_{2} e:\theta_{1},\theta_{2}\in(0,\infty),\ \theta_{1}^{\tilde{r}}\theta_{2}^{1-\tilde{r}}=1\},
\end{align*}
where $a^{\lfloor r\rfloor}$ is taken to equal $e$ in the case where $\lfloor r\rfloor=0$.
\end{definition}

By Lemma~\ref{L:a^q}, the above definition of strictly positive real exponents extends the natural definition of strictly positive rational exponents previously discussed. We next give an easy corollary of Proposition~\ref{P:Mali&Me}(3).

\begin{corollary}\label{C:T(a^r)=T(a)^r}
Let $A$ and $B$ be weighted geometric mean closed Archimedean $\Phi$-algebras over $\mathbb{K}$ with unit elements $e$ and $e'$, respectively. Suppose $C$ is a $\Phi$-subalgebra of $A$ and that $T\colon C\rightarrow B$ is a multiplicative vector lattice homomorphism such that $T(e)=e'$. Let $a\in A^{+}$ and $r\in(0,\infty)$. If $a,a^{r}\in C$ then $T(a^{r})=\bigl(T(a)\bigr)^{r}$.
\end{corollary}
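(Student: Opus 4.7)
The plan is to replicate the strategy of Lemma~\ref{L:a^q}: represent $a^r$ as a weighted geometric mean whose two arguments both lie in $C$, then invoke Proposition~\ref{P:Mali&Me}(3). Write $n = \lfloor r\rfloor$ and $\tilde r = r - n$. The case $\tilde r = 0$ (i.e.\ $r\in\mathbb N$) is immediate: the formula in Definition~\ref{D:a^r} collapses to $a^r = a^n$, and the multiplicativity of $T$ gives $T(a^r) = (T(a))^n = (T(a))^r$. Henceforth I assume $\tilde r \in (0,1)$.

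The key auxiliary identity, to be verified in both $A$ and $B$ (the latter with $a$ replaced by $T(a) \in B^+$), is
\[
a^r = \bigtriangleup\!\bigl((a^{n+1}, \tilde r), (a^n, 1 - \tilde r)\bigr).
\]
Granted this, $a^{n+1}$ and $a^n$ both lie in $C$ (since $a\in C$ and $C$ is closed under multiplication), while the right-hand side equals $a^r \in C$ by hypothesis. Proposition~\ref{P:Mali&Me}(3), applied with $G = C$, $f_1 = a^{n+1}$, $f_2 = a^n$, $r_1 = \tilde r$, $r_2 = 1 - \tilde r$, then yields
\[
T(a^r) = \bigtriangleup\!\bigl((T(a^{n+1}), \tilde r), (T(a^n), 1 - \tilde r)\bigr) = \bigtriangleup\!\bigl(((T(a))^{n+1}, \tilde r), ((T(a))^n, 1 - \tilde r)\bigr),
\]
where the second equality uses the multiplicativity of $T$. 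The auxiliary identity in $B$ then identifies this expression as $(T(a))^r$.

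To prove the auxiliary identity in a weighted geometric mean closed Archimedean $\Phi$-algebra $X$ with unit $1_X$ and $b\in X^+$, I would mimic the representation argument of Lemma~\ref{L:a^q}. Let $D$ be the Archimedean $\Phi$-subalgebra of $X_\rho$ generated by $b$, $1_X$, $b^n$, $b^{n+1}$, $b^r$, the element $g_b = \bigtriangleup\!\bigl((b, \tilde r), (1_X, 1 - \tilde r)\bigr)$, and $\bigtriangleup\!\bigl((b^{n+1}, \tilde r), (b^n, 1 - \tilde r)\bigr)$. By \cite[Corollary 2.7]{BusdPvR}, the nonzero multiplicative vector lattice homomorphisms $\omega \colon D \to \mathbb R$ separate points of $D$, so it suffices to check the identity after any such $\omega$. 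Proposition~\ref{P:Mali&Me}(3) applied to $\omega$ gives $\omega\bigl(\bigtriangleup((b^{n+1}, \tilde r), (b^n, 1 - \tilde r))\bigr) = \omega(b)^{(n+1)\tilde r + n(1 - \tilde r)} = \omega(b)^r$, while Definition~\ref{D:a^r}, the multiplicativity of $\omega$, and a second application of Proposition~\ref{P:Mali&Me}(3) (to $g_b$) give $\omega(b^r) = \omega(b^n)\,\omega(g_b) = \omega(b)^n \cdot \omega(b)^{\tilde r} = \omega(b)^r$. Point-separation then yields the identity in $X$.

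The main obstacle is precisely this auxiliary identity. Morally it just expresses the distributivity $a^n\cdot\inf_i y_i = \inf_i a^n y_i$ inside the infimum appearing in Definition~\ref{D:a^r}, but since $A$ is not assumed Dedekind complete I cannot invoke order continuity of multiplication directly, and so the representation argument is the cleanest way around it. A minor additional point of care is the complex case $\mathbb K = \mathbb C$: the point-separation step must be carried out inside the real part $X_\rho$, since \cite[Corollary 2.7]{BusdPvR} is a real-vector-lattice statement.
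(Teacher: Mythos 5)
Your argument is correct, and it is in the same spirit as the paper's standard technique (Proposition~\ref{P:Mali&Me}(3) combined with point-separation by real-valued multiplicative vector lattice homomorphisms, exactly as in Lemmas~\ref{L:a^q} and~\ref{L:powerrules}), but it is not literally the one-line deduction the paper appears to have in mind. The paper presents the corollary as an immediate consequence of Proposition~\ref{P:Mali&Me}(3) applied to Definition~\ref{D:a^r}, i.e.\ to the factor $g_a=\bigtriangleup\bigl((a,\tilde r),(e,1-\tilde r)\bigr)$ together with multiplicativity on $a^{\lfloor r\rfloor}$; that route, however, needs $g_a\in C$, which the hypotheses $a,a^r\in C$ only yield directly when $\lfloor r\rfloor=0$. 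Your reformulation $a^r=\bigtriangleup\bigl((a^{n+1},\tilde r),(a^n,1-\tilde r)\bigr)$ with $n=\lfloor r\rfloor$ sidesteps this neatly: the arguments $a^n,a^{n+1}$ lie in $C$ because $C$ is a subalgebra containing $a$ and $e$, and the $\bigtriangleup$ itself equals $a^r\in C$ by hypothesis, so Proposition~\ref{P:Mali&Me}(3) applies with $G=C$ without further ado. The price is the auxiliary identity, which you prove by the paper's own representation argument (generate a $\Phi$-subalgebra of $X_\rho$ containing all relevant elements, evaluate under every nonzero multiplicative vector lattice homomorphism $\omega$, check that both sides go to $\omega(b)^r$, and separate points via \cite[Corollary 2.7]{BusdPvR}); this is exactly the right way to avoid any appeal to order continuity of multiplication, and your handling of the degenerate cases $\tilde r=0$ and $n=0$ and of the complex case via $X_\rho$ is sound. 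In short: same toolbox, but a more careful decomposition that actually closes a small gap the paper's ``easy corollary'' glosses over.
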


The following lemma verifies some familiar (and needed for Theorems~\ref{T:HI} and \ref{T:MI}) arithmetical rules for positive real exponents in geometric mean closed Archimedean $\Phi$-algebras over $\mathbb{K}$.

\begin{lemma}\label{L:powerrules}
Let $p,q\in(0,\infty)$, and let $A$ be a weighted geometric mean closed Archimedean $\Phi$-algebra over $\mathbb{K}$. For each $a\in A^{+}$, the following hold.
\begin{itemize}
	\item[(1)] $(a^{p})^{q}=a^{pq}$.
	\item[(2)] $a^{p}a^{q}=a^{p+q}$.
\end{itemize}
\end{lemma}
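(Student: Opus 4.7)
My plan is to reduce both identities to the corresponding elementary identities for nonnegative real numbers, following the representation-theoretic strategy used in the proofs of Proposition~\ref{P:gmfa} and Lemma~\ref{L:a^q}. Fix $a\in A^{+}$ and let $e$ denote the unit of $A$. For (1), let $C$ be the Archimedean $\Phi$-subalgebra of $A_{\rho}$ generated by the elements $a$, $e$, $a^{p}$, $(a^{p})^{q}$ and $a^{pq}$; for (2), let $C$ be the Archimedean $\Phi$-subalgebra of $A_{\rho}$ generated by $a$, $e$, $a^{p}$, $a^{q}$ and $a^{p+q}$ (so that $a^{p}a^{q}\in C$ automatically). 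By \cite[Corollary 2.7]{BusdPvR}, the nonzero multiplicative vector lattice homomorphisms $\omega\colon C\to\mathbb{R}$ separate the points of $C$, and each such $\omega$ satisfies $\omega(e)=1$.

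The key step is then a pointwise evaluation. Applying Corollary~\ref{C:T(a^r)=T(a)^r} with codomain $\mathbb{R}$ gives $\omega(a^{r})=\omega(a)^{r}$ whenever $a,a^{r}\in C$. For (1), applying the same corollary with $a$ replaced by $a^{p}\in A^{+}$ and with exponent $q$ yields
\[
\omega\bigl((a^{p})^{q}\bigr)=\omega(a^{p})^{q}=\bigl(\omega(a)^{p}\bigr)^{q}=\omega(a)^{pq}=\omega(a^{pq}),
\]
and separation of points forces $(a^{p})^{q}=a^{pq}$. For (2), the same machinery gives
\[
\omega(a^{p}a^{q})=\omega(a^{p})\,\omega(a^{q})=\omega(a)^{p}\,\omega(a)^{q}=\omega(a)^{p+q}=\omega(a^{p+q}),
\]
so separation yields $a^{p}a^{q}=a^{p+q}$.

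The only point requiring care is the use of Corollary~\ref{C:T(a^r)=T(a)^r} with $a^{p}$ playing the role of the base: its hypothesis requires both $a^{p}$ and $(a^{p})^{q}$ to lie in $C$, which is why $(a^{p})^{q}$ (defined a priori in $A$ via Definition~\ref{D:a^r} applied to the positive element $a^{p}$) is included among the generators of $C$ in (1). Once the generating set is chosen correctly, all of the functional-calculus work has been carried out in Corollary~\ref{C:T(a^r)=T(a)^r}, and what remains is only the familiar identities $(x^{p})^{q}=x^{pq}$ and $x^{p}x^{q}=x^{p+q}$ for $x\geq 0$. I therefore do not expect any serious obstacle beyond the bookkeeping of which generators $C$ must contain.
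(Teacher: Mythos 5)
Your proposal is correct and follows essentially the same route as the paper: pass to a $\Phi$-subalgebra $C$ of $A_{\rho}$ containing the relevant powers, apply Corollary~\ref{C:T(a^r)=T(a)^r} to each nonzero multiplicative vector lattice homomorphism $\omega\colon C\rightarrow\mathbb{R}$, and invoke separation of points. The only (immaterial) difference is that the paper generates $C$ by the fractional-part powers $a^{\tilde{p}}$, $(a^{p})^{\tilde{q}}$, $a^{\widetilde{pq}}$ and observes that $a^{p}$, $(a^{p})^{q}$, $a^{pq}$ then lie in $C$, whereas you include the full powers directly among the generators; the paper also proves only (1), leaving (2) to the reader, while you carry out both.
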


\begin{proof}
We prove (1), leaving the similar proof of (2) to the reader. To this end, let $a\in A^{+}$. Let $C$ be the real $\Phi$-subalgebra of $A_{\rho}$ generated by $a, a^{\tilde{p}}, (a^{p})^{\tilde{q}}$, and $a^{\overset{\sim}{pq}}$, and note that $a, a^{p}, (a^{p})^{q}, a^{pq}\in C$. If $\omega\colon C\rightarrow\mathbb{R}$ is a nonzero multiplicative vector lattice homomorphism then $\omega$ is surjective and thus $\omega(e)=1$. Using Corollary~\ref{C:T(a^r)=T(a)^r}, we obtain
\begin{align*}
\omega\bigl((a^{p})^{q}\bigr)&=\bigl(\omega(a^{p})\bigr)^{q}=\bigl(\omega(a)\bigr)^{pq}=\omega(a^{pq}).
\end{align*}
Since the nonzero multiplicative vector lattice homomorphisms separate the points of $C$ (see \cite[Corollary 2.7]{BusdPvR}), we conclude that $(a^{p})^{q}=a^{pq}$.
\end{proof}

In light of Definition~\ref{D:a^r}, the second part of Lemma~\ref{L:a^q} can now be improved to include irrational exponents. The proof of Lemma~\ref{L:a^r} uses real-valued multiplicative vector lattice homomorphisms, similar to what is found in the proofs of Lemmas \ref{L:a^q} and \ref{L:powerrules}. Therefore, the proof is omitted.

\begin{lemma}\label{L:a^r}
Let $A$ be a weighted geometric mean closed Archimedean $\Phi$-algebra over $\mathbb{K}$. If $r_{1},\dots,r_{n}\in(0,1)$ are such that $\sum\limits_{k=1}^{n}r_{k}=1$ then for every $a_{1},\dots,a_{n}\in A^{+}$,
\begin{align*}
\prod\limits_{k=1}^{n}a_{k}^{r_{k}}=\bigtriangleup\limits_{k=1}^{n}(a_{k},r_{k}).
\end{align*}
\end{lemma}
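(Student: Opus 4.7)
The plan is to follow the template established in the proofs of Lemma~\ref{L:a^q} and Lemma~\ref{L:powerrules}: reduce to a finitely generated real $\Phi$-subalgebra of $A_\rho$ and test the identity against all nonzero real-valued multiplicative vector lattice homomorphisms, which separate points by \cite[Corollary 2.7]{BusdPvR}.

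Working inside $A_\rho$, I would take $C$ to be the Archimedean $\Phi$-subalgebra of $A_\rho$ generated by $a_1,\dots,a_n$, the powers $a_1^{r_1},\dots,a_n^{r_n}$ (which are defined in $A$ via Definition~\ref{D:a^r} since $A$ is weighted geometric mean closed), their product $\prod_{k=1}^{n}a_k^{r_k}$, and the element $\bigtriangleup\limits_{k=1}^{n}(a_k,r_k)$. Let $\omega\colon C\to\mathbb{R}$ be any nonzero multiplicative vector lattice homomorphism; as in the previous lemmas, $\omega$ is surjective and $\omega(e)=1$.

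I would then evaluate $\omega$ on each side. On the left, since $a_k,a_k^{r_k}\in C$, Corollary~\ref{C:T(a^r)=T(a)^r} applied with codomain $\mathbb{R}$ (trivially a weighted geometric mean closed Archimedean $\Phi$-algebra over $\mathbb{K}$) gives $\omega(a_k^{r_k})=\omega(a_k)^{r_k}$, so by multiplicativity
\[
\omega\Bigl(\prod_{k=1}^{n}a_k^{r_k}\Bigr)=\prod_{k=1}^{n}\omega(a_k)^{r_k}.
\]
On the right, $C$ is a vector sublattice of the weighted geometric mean closed space $A_\rho$, the element $\bigtriangleup\limits_{k=1}^{n}(a_k,r_k)$ lies in $C$, and $\omega$ is a vector lattice homomorphism into the weighted geometric mean closed $\mathbb{R}$; Proposition~\ref{P:Mali&Me}(3) combined with the explicit scalar formula $\gamma_{r_1,\dots,r_n}(x_1,\dots,x_n)=\prod_{k=1}^{n}x_k^{r_k}$ on $(\mathbb{R}^+)^n$ recalled at the start of Section~\ref{S:HI} then yields
\[
\omega\Bigl(\bigtriangleup\limits_{k=1}^{n}(a_k,r_k)\Bigr)=\bigtriangleup\limits_{k=1}^{n}\bigl(\omega(a_k),r_k\bigr)=\gamma_{r_1,\dots,r_n}\bigl(\omega(a_1),\dots,\omega(a_n)\bigr)=\prod_{k=1}^{n}\omega(a_k)^{r_k}.
\]

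Since the two sides agree under every such $\omega$, separation of points finishes the argument. Nothing in this plan is deep; the only real obstacle is bookkeeping, namely verifying that $C$ is chosen large enough for the hypotheses of Corollary~\ref{C:T(a^r)=T(a)^r} (which requires both $a_k$ and $a_k^{r_k}$ to lie in the subalgebra to which $\omega$ is restricted) and Proposition~\ref{P:Mali&Me}(3) (which requires $\bigtriangleup\limits_{k=1}^{n}(a_k,r_k)\in C$) to be satisfied verbatim. Throwing $a_k^{r_k}$, $\prod_k a_k^{r_k}$, and $\bigtriangleup\limits_{k=1}^{n}(a_k,r_k)$ into the generating set at the outset handles this cleanly.
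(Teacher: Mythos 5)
Your proposal is correct and follows exactly the route the paper intends: the authors omit the proof of Lemma~\ref{L:a^r} precisely because it proceeds "via real-valued multiplicative vector lattice homomorphisms, similar to the proofs of Lemmas~\ref{L:a^q} and \ref{L:powerrules}," which is what you do, with the generating set chosen so that the hypotheses of Corollary~\ref{C:T(a^r)=T(a)^r} and Proposition~\ref{P:Mali&Me}(3) hold verbatim and \cite[Corollary 2.7]{BusdPvR} supplies the separation of points.
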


We proceed with the main theorem of this section.

\begin{theorem}\label{T:HI} \textnormal{\textbf{(H\"older Inequality)}}
Let $p_{1},\dots,p_{n}\in(1,\infty)$ with $\sum\limits_{k=1}^{n}p_{k}^{-1}=1$. Assume $A$ is a weighted geometric mean closed Archimedean $\Phi$-algebra over $\mathbb{K}$.
\begin{itemize}
\item[(1)] If $B$ is also a weighted geometric mean closed Archimedean $\Phi$-algebra over $\mathbb{K}$ and $T\colon A\rightarrow B$ is a positive linear map then
\begin{align*}
T\Bigl(\prod\limits_{k=1}^{n}|a_{k}|\Bigr)\leq\prod\limits_{k=1}^{n}\bigl(T(|a_{k}|^{p_{k}})\bigr)^{1/p_{k}}\ (a_{1},\dots,a_{n}\in A).
\end{align*}
\item[(2)] If $B$ is a weighted geometric mean closed Archimedean vector lattice over $\mathbb{K}$ and $T\colon A\rightarrow B$ is a positive linear map then
\begin{align*}
T\Bigl(\prod\limits_{k=1}^{n}|a_{k}|\Bigr)\leq\bigtriangleup\limits_{k=1}^{n}\bigl(T(|a_{k}|^{p_{k}}),1/p_{k}\bigr)\ (a_{1},\dots,a_{n}\in A).
\end{align*}
\end{itemize}
\end{theorem}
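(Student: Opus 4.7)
The plan is to deduce both parts of the theorem from a single identity inside $A$ combined with the multivariate Maligranda-type inequality of Proposition~\ref{P:Mali&Me}(1). Set $r_k=1/p_k$, so $r_1,\dots,r_n\in(0,1)$ and $\sum_{k=1}^{n} r_k = 1$. Since $A$ is weighted geometric mean closed, Definition~\ref{D:a^r} guarantees that $|a_k|^{p_k}\in A^{+}$ exists for each $k$, and Lemma~\ref{L:powerrules}(1) yields $\bigl(|a_k|^{p_k}\bigr)^{1/p_k}=|a_k|^{p_k\cdot(1/p_k)}=|a_k|$. Applying Lemma~\ref{L:a^r} to the positive elements $|a_k|^{p_k}$ with weights $r_k$ then produces the key identity
\[
\prod_{k=1}^{n}|a_k| \;=\; \prod_{k=1}^{n}\bigl(|a_k|^{p_k}\bigr)^{1/p_k} \;=\; \bigtriangleup\limits_{k=1}^{n}\bigl(|a_k|^{p_k},1/p_k\bigr)
\]
inside $A$.

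For part (2), I would apply the positive linear map $T$ to both sides of this identity and invoke Proposition~\ref{P:Mali&Me}(1) with $f_k=|a_k|^{p_k}\in A^{+}$, obtaining
\[
T\Bigl(\prod_{k=1}^{n}|a_k|\Bigr) \;=\; T\Bigl(\bigtriangleup\limits_{k=1}^{n}\bigl(|a_k|^{p_k},1/p_k\bigr)\Bigr) \;\leq\; \bigtriangleup\limits_{k=1}^{n}\bigl(T(|a_k|^{p_k}),1/p_k\bigr),
\]
which is the desired inequality. For part (1), I would then exploit the $\Phi$-algebra structure on $B$: a second application of Lemma~\ref{L:a^r}, this time in $B$ to the positive elements $T(|a_k|^{p_k})\in B^{+}$ with the same weights $1/p_k$, converts the right-hand side into a genuine product,
\[
\bigtriangleup\limits_{k=1}^{n}\bigl(T(|a_k|^{p_k}),1/p_k\bigr) \;=\; \prod_{k=1}^{n}\bigl(T(|a_k|^{p_k})\bigr)^{1/p_k},
\]
and chaining this with the bound from (2) delivers the H\"older inequality in (1).

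The one step that really deserves checking is the identity $\prod_{k=1}^{n}|a_k|=\bigtriangleup_{k=1}^{n}(|a_k|^{p_k},1/p_k)$; everything else is a direct citation of results already established in the paper. I do not expect any genuine obstacle, since the weighted geometric mean closedness hypothesis has been engineered precisely so that all the relevant powers and infima exist, and the functional-calculus identities on which Lemmas~\ref{L:a^q}, \ref{L:powerrules}, and \ref{L:a^r} rest have already been verified via real-valued multiplicative vector lattice homomorphisms. Thus the role of the $\Phi$-algebra hypothesis on $B$ in part (1) is purely to translate an order-theoretic infimum into an algebraic product, whereas part (2) requires no multiplicative structure on $B$ at all.
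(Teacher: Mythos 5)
Your proposal is correct and follows essentially the same route as the paper: the identity $\prod_{k}|a_k|=\bigtriangleup_{k}(|a_k|^{p_k},1/p_k)$ via Lemma~\ref{L:powerrules}(1) and Lemma~\ref{L:a^r}, then Proposition~\ref{P:Mali&Me}(1), then Lemma~\ref{L:a^r} again in $B$ to turn the infimum into a product. The only cosmetic difference is that you derive (1) from (2), whereas the paper proves (1) directly and notes that (2) is similar.
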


\begin{proof}
We only prove (1) since the proof of (2) is similar. To this end, let $B$ be a weighted geometric mean closed Archimedean $\Phi$-algebra over $\mathbb{K}$, and suppose $T\colon A\rightarrow B$ is a positive linear map. Using Lemma~\ref{L:powerrules}(1) (first equality), Lemma~\ref{L:a^r} (second equality and last equality), and Proposition~\ref{P:Mali&Me}(1) (for the inequality), we have
\begin{align*}
T\Bigl(\prod\limits_{k=1}^{n}|a_{k}|\Bigr)&=T\Bigl(\prod\limits_{k=1}^{n}(|a_{k}|^{p_{k}})^{1/p_{k}}\Bigr)=T\Bigl(\bigtriangleup\limits_{k=1}^{n}(|a_{k}|^{p_{k}},1/p_{k})\Bigr)\\
&\leq\bigtriangleup\limits_{k=1}^{n}\bigl(T(|a_{k}|^{p_{k}}),1/p_{k}\bigr)=\prod\limits_{k=1}^{n}\bigl(T(|a_{k}|^{p_{k}})\bigr)^{1/p_{k}}.
\end{align*}
\end{proof}

\section{A Minkowski Inequality}\label{S:MI}

We employ the H\"older inequality in Theorem~\ref{T:HI}(1) to prove a Minkowski inequality in the setting of Archimedean $\Phi$-algebras over $\mathbb{K}$ in this section.

\begin{theorem}\label{T:MI} \textnormal{\textbf{(Minkowski Inequality)}}
Let $p\in(1,\infty)$. Suppose that $A$ and $B$ are both weighted geometric mean closed Archimedean $\Phi$-algebras over $\mathbb{K}$. For every positive linear map $T\colon A\rightarrow B$, we have
\begin{align*}
\Bigl(T\Bigl(\bigl|\sum\limits_{k=1}^{n}a_{k}\bigr|^{p}\Bigr)\Bigr)^{1/p}\leq\sum\limits_{k=1}^{n}\bigl(T(|a_{k}|^{p})\bigr)^{1/p}\ (a_{1},\dots,a_{n}\in A).
\end{align*}
\end{theorem}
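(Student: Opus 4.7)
The plan is to mimic the classical derivation of Minkowski's inequality from Hölder's inequality. The three ingredients I will need are (i) monotonicity of the $r$-th power on $A^{+}$ in order to reduce to positive summands, (ii) the identity $b^{p}=\sum_{k}a_{k}b^{p-1}$ supplied by Lemma \ref{L:powerrules}, and (iii) a termwise application of Theorem \ref{T:HI}(1) with the conjugate exponents $p$ and $q=p/(p-1)$, closed out by a cancellation step.

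I will first establish the auxiliary fact that $0\le u\le v$ in $A^{+}$ and $r\in(0,\infty)$ imply $u^{r}\le v^{r}$, following the same template as Proposition \ref{P:gmfa} and Lemma \ref{L:a^q}. Consider the $\Phi$-subalgebra $D$ of $A_{\rho}$ generated by $u$, $v$, $u^{r}$, and $v^{r}$; by Corollary \ref{C:T(a^r)=T(a)^r}, every nonzero multiplicative vector lattice homomorphism $\omega\colon D\to\mathbb{R}$ satisfies $\omega(u^{r})=\omega(u)^{r}\le\omega(v)^{r}=\omega(v^{r})$, and separation \cite[Corollary 2.7]{BusdPvR} applied to $(v^{r}-u^{r})^{-}$ yields $u^{r}\le v^{r}$. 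Combining this monotonicity (on the $A$-side with exponent $p$ and on the $B$-side with exponent $1/p$) with $|\sum a_{k}|\le\sum|a_{k}|$ and positivity of $T$ reduces the theorem to the case $a_{k}\in A^{+}$. Setting $b=\sum a_{k}$, Lemma \ref{L:powerrules}(2) gives $b^{p}=b\cdot b^{p-1}=\sum_{k}a_{k}b^{p-1}$; applying $T$, then Theorem \ref{T:HI}(1) to each summand with exponents $p$ and $q$, and using Lemma \ref{L:powerrules}(1) in the form $(b^{p-1})^{q}=b^{p}$, summation produces the intermediate inequality
\[
T(b^{p})\le\bigl(T(b^{p})\bigr)^{1/q}\sum_{k=1}^{n}\bigl(T(a_{k}^{p})\bigr)^{1/p}.
\]

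The hard part will be the final cancellation, since a general $\Phi$-algebra admits no literal division by $(T(b^{p}))^{1/q}$. I will address it by the same representation strategy used throughout the paper: work inside the $\Phi$-subalgebra $C$ of $B_{\rho}$ generated by $T(b^{p})$, the $T(a_{k}^{p})$, and their $1/p$-th powers, and evaluate the intermediate inequality under each nonzero multiplicative vector lattice homomorphism $\omega\colon C\to\mathbb{R}$. When $\omega(T(b^{p}))=0$ the conclusion is immediate, and otherwise genuine division in $\mathbb{R}$ gives $\omega\bigl((T(b^{p}))^{1/p}\bigr)\le\sum_{k}\omega\bigl((T(a_{k}^{p}))^{1/p}\bigr)$. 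Separation of points of $C$ by such homomorphisms \cite[Corollary 2.7]{BusdPvR}, applied to the positive part of the difference of the two sides, transfers the inequality to $B_{\rho}$ and hence to $B$; the complex case requires no extra work because every quantity involved already lies in $A_{\rho}$ or $B_{\rho}$.
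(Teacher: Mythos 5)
Your proposal is correct and follows essentially the same route as the paper's proof: the decomposition of $|b|^{p}$ as $|b|^{p-1}\cdot|b|$, the termwise application of Theorem~\ref{T:HI}(1) with conjugate exponents, and the cancellation of $\bigl(T(b^{p})\bigr)^{1/q}$ via real-valued multiplicative vector lattice homomorphisms and separation of points are exactly the paper's steps. The only organizational difference is that you treat general $n$ at once (which requires your auxiliary monotonicity lemma $0\le u\le v\Rightarrow u^{r}\le v^{r}$, correctly established by the same representation technique), whereas the paper proves the case $n=2$ directly from $|a+b|\le|a|+|b|$ and then inducts.
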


\begin{proof}
We prove the result for $n=2$ and note that the rest of the proof follows from a standard induction argument. To this end, let $T\colon A\rightarrow B$ be a positive linear map, and assume that $a,b\in A$. Let $q\in(1,\infty)$ satisfy $q^{-1}+p^{-1}=1$. By Lemma~\ref{L:powerrules}(2) (first equality), Theorem~\ref{T:HI}(1) (second inequality), and Lemma~\ref{L:powerrules}(1) (third equality),
\begin{align*}
T(|a+b|^{p})&=T(|a+b|^{p-1}|a+b|)\\
&\leq T\bigl(|a+b|^{p-1}(|a|+|b|)\bigr)=T(|a+b|^{p-1}|a|)+T(|a+b|^{p-1}|b|)\\
&\leq T\bigl((|a+b|^{p-1})^{q}\bigr)^{1/q}T(|a|^{p})^{1/p}+T\bigl((|a+b|^{p-1})^{q}\bigr)^{1/q}T(|b|^{p})^{1/p}\\
&=T(|a+b|^{p})^{1/q}T(|a|^{p})^{1/p}+T(|a+b|^{p})^{1/q}T(|b|^{p})^{1/p}\\
&=T(|a+b|^{p})^{1/q}\bigl(T(|a|^{p})^{1/p}+T(|b|^{p})^{1/p}\bigr).
\end{align*}
Setting $f=T(|a+b|^{p})$ and $g=T(|a|^{p})^{1/p}+T(|b|^{p})^{1/p}$, we have $f\leq f^{1/q}g$. Next let $C$ be the vector $\Phi$-subalgebra of $B$ generated by $f,f^{1/p},f^{1/q}$, and $g$. Let $\omega\colon C\rightarrow\mathbb{R}$ be a nonzero multiplicative vector lattice homomorphism, so that $\omega(e)=1$, where $e$ is the unit element of $A$. Using Corollary~\ref{C:T(a^r)=T(a)^r}, we have
\[
\omega(f)\leq\omega(f^{1/q}g)=\omega(f^{1/q})\omega(g)=\omega(f)^{1/q}\omega(g).
\]
Thus if $\omega(f)\neq 0$ then $\omega(f)^{1/p}\leq\omega(g)$. Of course, $\omega(f)^{1/p}\leq\omega(g)$ also holds in the case that $\omega(f)=0$, since $g\in B^{+}$. By Corollary~\ref{C:T(a^r)=T(a)^r} again, $\omega(f^{1/p})\leq\omega(g)$. Since the collection of all nonzero multiplicative vector lattice homomorphisms separate the points of $C$ (see \cite[Corollary 2.7]{BusdPvR}), we conclude that $f^{1/p}\leq g$. Therefore, we obtain
\[
T(|a+b|^{p})^{1/p}\leq T(|a|^{p})^{1/p}+T(|b|^{p})^{1/p}.
\]
\end{proof}

\bibliography{mybib}
\bibliographystyle{amsplain}

\end{document}